\newcommand{\newsstheorem}[2]{
  \newaliascnt{#1}{dummy}
  \newtheorem{#1}[#1]{#2}
  \aliascntresetthe{#1}
  \expandafter\def\csname #1autorefname\endcsname{#2}
}
\theoremstyle{plain}
\theoremstyle{definition}
\theoremstyle{remark}
\setlist[enumerate,1]{label={(\roman*)}}
\setlist[enumerate,2]{label={(\alph*)}}
\setlist[enumerate,3]{label={(\Roman*)}}
\newcommand*\printandhtml[1]{%
  #1%
  \CustomizeMathJax{#1}%
}
  \newcommand\dd{\mathrm{d}}
  \newcommand\NN{\mathbb{N}}
  \newcommand\RR{\mathbb{R}}
  \newcommand\CC{\mathbb{C}}
  \newcommand\PP{\mathbb{P}}
  \newcommand\EE{\mathbb{E}}
  \newcommand\Xx{\mathcal{X}}
  \newcommand\Mm{\mathcal{M}}
  \newcommand\GG{\mathscr{G}}
  \newcommand\Zb{\mathbf{Z}}
  \newcommand\Xb{\mathbf{X}}
  \newcommand\pf{\mathfrak{p}}
  \newcommand\Indic[1]{\Ind_{\{#1\}}}
  \newcommand\from{\colon}
  \newcommand\iu{\mathrm{i}}
  \newcommand\crc{^\ast}
  \newcommand\tree{\mathbb{U}}
  \newcommand\eve{\varnothing}
  \DeclareMathOperator{\dom}{dom}
  \DeclareMathOperator{\sgn}{sgn}
\newcommand\Ind{\mathbbm{1}}
\providecommand{\email}[1]{\href{mailto:#1}{\nolinkurl{#1}}}
\title{A growth-fragmentation model connected to the ricocheted stable process}
\author{A.\ R.\ Watson\footnote{University College London, UK. \email{alexander.watson@ucl.ac.uk}}}
\date{\today}
\begin{document}
\maketitle

\begin{abstract}
  Growth-fragmentation processes describe the evolution
  of systems in which cells grow slowly and fragment suddenly.
  Despite originating as a way to describe biological phenomena,
  they have recently been found to describe
  the lengths of certain curves in
  statistical physics models.
  In this note, we describe a new growth-fragmentation process
  connected to random planar maps with faces of large degree, having
  as a key ingredient the ricocheted stable process recently
  discovered by Budd.
  The process has applications to the excursions of
  planar Brownian motion and Liouville quantum gravity.
  
  \medskip
{\small
  \noindent
  \emph{2010 Mathematics Subject Classification.}
  60J80, 60G18, 60G52, 60G51
}
\end{abstract}

\section{Introduction}

A random planar map is a random graph embedded in the plane. One approach
to understanding features of random planar maps is to explore
them, starting from a distinguished `root face' and sequentially discovering
adjoining faces at random.
Part-way through such an exploration, the boundary of the explored
region forms a set of cycles of vertices, and subsequent
exploration may either enlarge certain cycles
by adding newly discovered vertices, or cause them to
split apart as a face is discovered which cuts across one.
\citet{BBCK-maps} considered exploring a Boltzmann 
random planar map
with faces of large degree and, in a scaling limit
as the degree of the root face grows, they found and characterised
a growth-fragmentation process which describes the growth
and splitting of perimeters of these cycles.

Denote Bertoin et al.'s growth-fragmentation process by $\Zb\crc = (\Zb\crc(t): t\ge 0)$,
where
$\Zb\crc(t) = (Z_1\crc(t),Z_2\crc(t),\dotsc)$ 
describes the collection of cell `masses'
(rescaled cycle perimeters)
at 
time $t$, listed in decreasing order.
The cell masses $\Zb\crc$ have both upwards and downwards jumps.
Whenever a cell of $\Zb\crc$ jumps
down in mass from $u$ to $ux$ (with $0<x<1$), an additional
cell of mass $u(1-x)$ is added to the system (thus conserving mass at downward jumps,
i.e. fragmentation events). However, when a cell mass jumps up, no additional cells are added.

In this work, we consider the effect of additionally introducing
cells
at the \emph{upward} jump times of
cells in $\Zb\crc$.
If a cell jumps in mass from $u$ to $ux$, where $x>1$,
then with probability $r$,
we introduce a new cell of mass $u(x-1)$.
This new growth-fragmentation will be denoted $\Zb$.

In terms of the exploration of a Boltzmann map,
the downward jumps of cell masses in $\Zb\crc$
come from splitting of cycles, whereas
the upward jumps represent the discovery of faces of macroscopically large degree.
Heuristically, we may think of each new cell arising
from the upward jumps in $\Zb$ as corresponding to
the discovery,
with probability $r$, of a further conditionally
independent Boltzmann map
embedded within a face of the original map.

In \cite{Budd-ricochet}, Budd studied the metric properties of random planar
maps coupled with an $O(n)$ loop model.
Such maps can be decomposed along the loops to form a \emph{gasket},
which is another random planar map
of the type studied by \cite{BBCK-maps},
together with conditionally independent submaps
to be inserted into certain faces of the gasket.
It was
found that the scaling limit of perimeters of certain distinguished cycles
in such maps is related to the so-called \emph{ricocheted stable process}.
This is a perturbation of the stable process in which, at each attempted jump
below zero, the process is `ricocheted' with some probability
and restarts from a positive value.

\medskip\noindent
The main result of this note,
\autoref{t:gf-rico} in \autoref{s:results},
is that the ricocheted stable process and
its conditionings are intimately connected to the process
$\Zb$,
and that the ricochet probability is proportional to the probability $r$ 
of discovering a new cell at each upward jump.
This is natural, given the similarity in the descriptions
of the growth-fragmentation and random planar map model
above, but the connection does not appear to be known.
A rigorous description of the appearance
of this process in the scaling limit of loop-decorated random maps may
be beyond reach at present, so we confine ourselves here
to the discussion of the growth-fragmentation processes
themselves.

As it turns out, the same process $\Zb$ can be used to describe
the excursions of a planar Brownian motion, as explored
by \cite{AdS-exc}.
We discuss this in more detail
in \autoref{s:results}, and give an application to functionals
of small excursions and, in the general setting, large cells.

Similarly, the process $\Zb\crc$ arises in the context of
Liouville quantum gravity \cite{MSW-scle}, 
where the idea of introducing new cells at the upward jumps
is also discussed,
giving rise to the tree structure
denoted there by $\tilde{\mathcal{T}}$. This tree
can be viewed as representing
the genealogy of the growth-fragmentation $\Zb$.

\section{Ricocheted stable processes}
\label{s:ricochet}

We begin with a precise description of the ricocheted stable process,
which is a self-similar Markov process derived from a Lévy process.
Lévy processes are real-valued
stochastic processes with stationary, independent
increments and initial value $0$.
A Lévy process $X$ can be characterised by its Laplace exponent
$\psi$,
which is defined by the relation $\EE[ e^{qX_t}] = e^{t\psi(q)}$
and satisfies the \emph{Lévy--Khintchine formula}
\[
  \psi(q) 
  = \frac{1}{2}\sigma^2 q^2
  + aq 
  + \int_{\RR} (e^{qy} - 1 - qy\Indic{\lvert y\rvert \le 1}) \, \Pi(\dd y),
\]
where $\sigma\ge 0$, $a\in \RR$, and $\Pi$ is a measure (the Lévy measure)
on $\RR\setminus\{0\}$
satisfying $\int_{\RR} \min\{y^2,1\}\, \Pi(\dd y) < \infty$.
In general, $\psi$ is finite at least for $q \in \iu\RR$, but in many
cases it is finite even for some neighbourhood of $0$ in $\CC$.
A Lévy process with initial value $0$ can be started from
any $x\in\RR$ by considering $t \mapsto X_t + x$.

Consider a stochastic process $X$ with associated probability measures
$(\PP_x)_{x\in E}$, for some state space $E$ which is closed under
multiplication by positive scalars,
where $\PP_x(X_0=x) = 1$.
We say that $X$ satisfies the \emph{scaling property}
with index $\theta$ if, for any $x \in E$ and any $c>0$,
\begin{equation}
  \label{e:scaling}
  (cX_{tc^{-\theta}})_{t\ge 0}
  \text{ under } \PP_x \text{ has the same distribution as }
  X \text{ under } \PP_{cx}.
\end{equation}

Let parameters $\theta,\rho$ be chosen from the set
\begin{multline*}
  \mathcal{A}_{\theta,\rho}
  =
  \{ (\theta,\rho) : \theta \in (0,1), \rho \in (0,1)\}
  \cup \{ (\theta,\rho): \theta \in (1,2);\, \theta\rho,\theta(1-\rho) \in (0,1) \} \\
  {} \cup \{ (\theta,\rho) = (1,1/2) \}.
\end{multline*}
Then, the function
\begin{equation}
  \label{e:stable}
  \psi_{\theta,\rho}(q)
  = -c\lvert q\rvert ^\theta \bigl( 1-\iu\beta \tan(\pi\theta/2) \sgn(q/\iu)\bigr),
  \qquad
  q \in \iu\RR,
\end{equation}
where $\beta = \frac{\tan(\pi \theta (\rho-1/2))}{\tan(\pi\theta/2)}$,
is the Laplace exponent of a Lévy process, known as a \emph{stable process}
with index $\theta$ (or a \emph{$\theta$-stable process})
and positivity parameter $\rho$.

Stable processes fulfil the scaling property \eqref{e:scaling}
with $E=\RR$.
Indeed, every Lévy process satisfying the scaling property
has Laplace exponent of form \eqref{e:stable}, up to a multiplicative factor
and with a few exceptions:
our parameter restrictions mean that 
in this work we neglect symmetric Cauchy processes with drift ($\theta=1,\, \rho\ne 1/2$),
Brownian motion ($\theta=2$),
processes with monotone paths ($\theta\in (0,1)$, $\rho =0$ or $\rho=1$)
and other processes which jump only in one direction
($\theta\in (1,2)$, $\theta\rho = 1$ or
$\theta(1-\rho)=1$.)

We are now in a position to define the ricocheted stable process,
as detailed by \citet{Budd-ricochet} and \citet{KPV-dblhg}.
Let $x>0$ and take a stable process $X$ under $\PP_x$ with index $\theta$
and positivity parameter $\rho$. Take an additional parameter
$\pf \in [0,1]$, the \emph{ricochet probability}.
Define the first passage time below zero:
\[ \tau = \inf\{t\ge 0: X_t < 0 \}. \]
We define a new process $Y$, which follows $X$ up until time $\tau$.
At time $\tau$, with probability $1-\pf$, $Y$ is killed
(i.e., sent to the absorbing state $0$)
and with probability $\pf$,
it moves not to $X_\tau$, but to $Y_\tau = -X_\tau$; the
jump of the stable process below zero is `ricocheted' back above the
origin. Subsequently, $Y$ follows a stable process under
$\PP_{-X_{\tau}}$, until the first passage time of this process below zero,
at which point the probabilistic ricochet occurs again. This iteratively
constructs an entire path of the process $Y$, up until possibly reaching 
the point $0$, at which point $Y$ is killed.

The process $Y$ is called the \emph{ricocheted stable process}.
It is an example of a 
\emph{positive, self-similar Markov process (pssMp)}.
In general, a process is called a pssMp with index $\theta$
if it is a standard Markov
process with state space $[0,\infty)$, has $0$ as an absorbing state,
and satisfies the scaling property \eqref{e:scaling}.

An important property of pssMps, which explains their usefulness and
will allow us to explore the ricocheted stable process in more detail,
is that they are in bijection with Lévy processes, via the
\emph{Lamperti transformation} \cite[§13]{Kyp2}. 
For a pssMp $Y$ with index $\theta$,
let $S(t) = \int_0^t (Y_u)^{-\theta}\, \dd u$, and define
$T$ as the inverse of $S$. Then, the process
\[ \xi_s = \log Y_{T(s)}, \quad s \ge 0, \]
is a Lévy process (possibly killed and sent to $-\infty$),
the Lamperti transformation of $Y$.

The Lamperti transformations of the pssMp given by the
stable process killed at first passage below zero, and conditionings
thereof,
are well-known \cite{KP-hg}, and belong
to the class of `hypergeometric' Lévy processes.
This identifiability extends to ricocheted stable processes as well
\cite{Budd-ricochet,KPV-dblhg}: the Lamperti transformation
of the ricocheted stable process $Y$ is the Lévy
process $\xi$ with Laplace exponent
\[
  \psi(q)
  =
  - 2^{\theta}
  \frac{\Gamma(\frac{\theta-q}{2})\Gamma(\frac{1+\theta-q}{2})}
  {\Gamma(\frac{b-\sigma-q}{2})\Gamma(\frac{2-\sigma-b-q}{2})}
  \frac{\Gamma(\frac{1+q}{2})\Gamma(\frac{2+q}{2})}
  {\Gamma(\frac{\sigma+b+q}{2})\Gamma(\frac{2+\sigma-b+q}{2})},
  \quad
  q \in (-1,\theta),
\]
where $\sigma = \frac{1}{2} - \theta(1-\rho)$
and $b = \frac{1}{\pi}\arccos(\pf\cos(\pi\sigma))$.

We will say that $Y$ is a \emph{ricocheted stable process
with parameters} $(\theta,\sigma,b)$. If we specify the
set of admissible parameters
\begin{multline*}
  \mathcal{A}_{\theta,\sigma,b} =
  \{ (\theta,\sigma,b) : \theta \in (0,1), \,
  \sigma \in (1/2-\theta,1/2), \, b \in [\lvert \sigma \rvert, 1/2] \} \\
  {} \cup
  \{ (\theta,\sigma,b) : \theta \in (1,2),\,
    \sigma \in (-1/2,3/2-\theta),\,
  b \in [\lvert\sigma\rvert,1/2] \} \\
  {} \cup \{ (\theta,\sigma,b) : \theta=1, \sigma=0, b\in [0,1/2]\},
\end{multline*}
then these are in one-to-one correspondence with the admissible
parameters $\theta$, $\rho$ and $\pf$.

The Lamperti transform of $Y$
has other nice properties. Its Wiener-Hopf factorisation is known,
and it is an example of a \emph{double hypergeometric Lévy process}
\cite{KPV-dblhg}.

Note that when $\rho = 1/2$, meaning that $X$ is symmetric, the
ricocheted stable process is equal to $\lvert X\rvert$, the absolute value
of $X$, with additional probability of
killing at the times when $X$
changes sign.
When $\pf = 0$, $Y$ is the stable process killed upon first passage below zero.

\section{A novel growth-fragmentation}
$\Zb\crc$ is an example of a \emph{Markovian
growth-fragmentation}, in the sense of \cite{BeMGF}.
These processes can be described more formally as
follows.
For each $x>0$, we construct a process, under the probability
measure $\PP_x$, starting from a single cell of mass $x$.
For $u \in \tree = \cup_{n\ge 0}\NN^n$, the
set of Ulam-Harris labels, we define processes
$\Xx_u = (\Xx_u(t))_{t\ge 0}$ as follows.
The process $\Xx_\eve$ is a Markov process 
with state space $[0,\infty)$ and with prescribed distribution,
known as the \emph{cell process},
which we interpret as the mass of the initial cell,
such that $\Xx_\eve(0) = x$.
We assign $b_\eve = 0$, its birth time.
If we have constructed $\Xx_u$, then we list the jump times and 
jump ratios
$\{ (t_i,x_i) : i \ge 1 \}$ in some appropriate order, such as
in decreasing order of jump ratio; recall
here that, if $\Xx_u$ jumps at time $t_i$, the
jump ratio is $x_i = \frac{\Xx_u(t_i)}{\Xx_u(t_i-)}$.
A \emph{selection probability} $c\from [0,\infty) \to [0,1]$
determines which jumps lead to additional cells.
For each $i\ge 1$, with probability $c(x_i)$, we define the process
$\Xx_{ui}$ to be equal in distribution to the cell process, started
from $\Xx_{ui}(0) = \lvert\Xx_u(t_i)-\Xx_u(t_i-)\rvert = \lvert\Xx_u(t_i-)(x_i-1)\rvert$,
and let $b_{ui} = b_u + t_i$.
(On the complementary event with probability $1-c(x_i)$,
no new process is defined.)
The
Markovian growth-fragmentation process at time $t\ge 0$,
$\mathbf{X}(t)$, is then given by
the decreasing rearrangement $(X_1(t),X_2(t),\dotsc)$
of $(\Xx_u(t - b_u) : u \in \tree)$.
For clarity, we will call $\mathcal{X} = (\mathcal{X}_u : u \in\tree)$
the generational
growth-fragmentation associated with $\mathbf{X}$.

When the cell process in a Markovian growth-fragmentation is 
a pssMp
of index $\theta$, we say that the growth-fragmentation
itself is \emph{self-similar}, with \emph{index} $\alpha=-\theta$.
This is the case for
the process $\Zb\crc$ found by \cite{BBCK-maps},
parametrised by $\theta \in (1/2,3/2]$,
in which the cell process
is a pssMp of index $\theta$
and can be characterised
by its Lamperti transformation,
which is a Lévy process whose Laplace exponent
is given \cite[p.~702]{BBCK-maps} by
\[
  \psi\crc(q)
  = aq + \int_{(-\log 2,\infty)} \bigl(e^{qy} - 1 +q(1-e^y)\bigr)\, \Lambda(\dd y),
\]
where
\[
  a
  = \frac{\Gamma(2-\theta)}{2\Gamma(2-2\theta)\sin(\pi\theta)}
  + \frac{\Gamma(\theta+1)B_{1/2}(-\theta,2-\theta)}{\pi},
\]
with $B$ the incomplete beta function,
and $\Lambda = \nu \circ \log^{-1}$, with $\nu$ defined by
\begin{multline*}
  \nu(\dd x)
  = \frac{\Gamma(\theta+1)}{\pi} \left(
    (x(1-x))^{-(\theta+1)} \Indic{1/2<x\le 1}
    \right. \\
    \left. {}
    + \sin(\pi(\theta-1/2)) (x(x-1))^{-(\theta+1)} \Indic{x>1}
  \right)\dd x,
\end{multline*}
known as the \emph{dislocation measure} of $\Zb\crc$.
In the case of $\Zb\crc$,
the selection probability is given by $c(x) = \Indic{x<1}$, meaning
that only negative jumps of the cell masses lead to
introduction of new cells.

%

A self-similar growth-fragmentation $\Xb$ can be 
characterised \cite[Theorem~1.2]{Shi-gf} by its index
$\alpha$ and its \emph{cumulant}, which is defined 
(see \cite[equation (10)]{BBCK-maps}) by the formula
\begin{equation}\label{e:kappa-gen}
  \EE_1\left[ \sum_{u\in\tree, \, \lvert u\rvert = 1} \Xx_u(0)^q \right]
  = 1 - \frac{\kappa(q)}{\psi(q)},
\end{equation}
for those $q$ where the left-hand side is finite;
where $\psi$ is the Laplace exponent belonging to the
Lamperti transform of $\Xb$'s cell process. Recall that
$\lvert u\rvert$ is the generation of $u$; that is, the length
of the word $u$.
It is known that $\kappa$ can be expressed
(see \cite[equation (5)]{BBCK-maps}) as
\[
  \kappa(q)
  = \psi(q) + \int_{(0,\infty)} c(x) \lvert 1-x\rvert^q \, \nu(\dd x).
\]

In general, if
$\omega\in(\dom\kappa)^\circ$ is such that $\kappa(\omega) \le 0$, then
the process
\[ 
  \Mm_\omega(n)
    = \sum_{u\in \tree,\, \lvert u\rvert = n+1} \Xx_u(0)^\omega,
    \quad n\ge 0, 
\]
is a supermartingale
in the filtration $\GG_n = \sigma(\Xx_u, \lvert u\rvert \le n)$;
and if $\kappa(\omega) = 0$, then it is a martingale.
This is shown in \cite[§2.3]{BBCK-maps},
and can be seen from \eqref{e:kappa-gen}, recalling that $\psi < \kappa$.
We define a change of measure
\[
  \left. \frac{\dd \PP^\omega_x}{\dd \PP_x} \right\rvert_{\GG_n}
  = x^{-\omega} \Mm_\omega(n),
\]
and additionally single out a distinguished cell $U = (U(n) : n\ge 0)$ 
under $\PP^\omega_\cdot$ by
\[
  \PP^\omega_x(U(n+1) = u \mid \GG_n)
  = \frac{\Xx_u(0)^\omega}{\Mm_\omega(n)},
  \quad u \in \tree, \, \lvert u\rvert = n+1.
\]
If we now define, by slight abuse of notation, $U(t)$ to be the
value of $U(n)$ such that $b_{U(n)} \le t < b_{U(n+1)}$,
then the (sub-Markov) process
\[ Y_\omega(t) = \Xx_{U(t)}(t-b_{U(t)}), \quad t \ge 0, \]
plays a special role, and is called the \emph{spine}.
It is a pssMp with index $-\alpha$ whose Lamperti transformation
has Laplace exponent $q \mapsto \kappa(\omega+q)$.

In the case of $\Zb\crc$, let us denote its cumulant by
$\kappa\crc$, an explicit formula for which was found in
\cite[equation (19)]{BBCK-maps}:
\[
  \kappa\crc(q)
  =
  -\Gamma(1+2\theta-q)\Gamma(q-\theta) \frac{\cos(\pi(\theta+1-q))}{\pi}
  =
  -\frac{\Gamma(1+2\theta-q)\Gamma(q-\theta)}
  {\Gamma(\frac{3}{2}+\theta-q)\Gamma(-\frac{1}{2}-\theta+q)},
\]
for $q \in (\theta,1+2\theta)$.
If we define
\begin{align*}
  \omega_- &= \theta+1/2, \\
  \omega_0 &= \theta+1, \\
  \omega_+ &= \theta+3/2,
\end{align*}
then $\kappa\crc(\omega_-) = \kappa\crc(\omega_+) = 0$, and
$\kappa\crc(\omega_0) < 0$, and the corresponding spines are identifiable.
$Y\crc_{\omega_0}$ 
is a stable process with parameters $\theta,\rho$ such that
$\theta(1-\rho) = 1/2$, killed upon going below zero;
$Y\crc_{\omega_-}$ is the same process
conditioned to hit zero continuously
in the sense of \cite{CR-cond-zero}; and $Y\crc_{\omega_+}$
is the process conditioned to stay positive in the sense of
\cite{KRW-cond}.
The spines corresponding to $\omega_\pm$ were exploited by \cite{BBCK-maps}
to describe Boltzmann maps. 

Our novel growth-fragmentation process
is obtained by additionally introducing cells at the upward jump times of
cells in $\Zb\crc$.
When a cell jumps in mass from $u$ to $ux$, where $x>1$,
we introduce a new cell of mass $u(x-1)$
with probability $r$.
Write $\Zb$ for this process.
Mathematically, the process $\Zb$ is a Markovian growth-fragmentation
with the same cell process as $\Zb\crc$, but with selection probability
$c(x) = \Indic{x<1} + r\Indic{x>1}$, meaning that all downward jumps
lead to daughter cells, and upward jumps lead to daughter cells with
probability $r$.

In principle, the process $\Zb$ may not be well-defined, in that it
may not be possible to rank its elements in descending order. However,
this can be done provided that it is \emph{locally finite},
meaning that, at all times and for all compact sets $K\subset (0,\infty)$,
the number of cells with size in $K$ is finite.

Our work in the next section will be to prove that $\Zb$ is locally
finite, establish its cumulant and characterise its spines.

\begin{remark}
  \begin{enumerate}
    \item
      Our description of Markovian growth-fragmentations
      is a minor elaboration on 
      the setting of \cite{BeMGF,Shi-gf},
      in which
      only cell processes with negative jumps were considered,
      and the selection probability was absent.
      These
      changes do not affect the arguments there.

    \item
      Unlike in $\Zb\crc$, there is no preservation of total mass
      (i.e., $\ell^1$-norm) of $\Zb$
      at fragmentation events.
      The cells described above could be thought of as
      cells of negative initial mass $u(1-x)$ (which would conserve
      mass) `ricocheted' back above the origin.
      We will still refer to $\Zb$ as a growth-fragmentation, though perhaps
      it would be equally appropriate to call it a branching self-similar Markov process,
      à la \citet{BM-bLp}.

    \item
      The process $\Zb\crc$ is denoted $\mathbf{X}^{(-\theta)}_\theta$ in \cite{BBCK-maps}.
      That work also identifies the growth-fragmentation arising from the
      cycle perimeters of the metric balls in a Boltzmann map. The two
      processes are very similar, and differ only by a stopping-line
      time change which shifts the self-similarity index from $\alpha=-\theta$
      to $\alpha=1-\theta$.
  \end{enumerate}
\end{remark}

\section{Results and proofs}
\label{s:results}

\begin{theorem}
  \label{t:gf-rico}
  The process $\Zb$ is locally finite, is self-similar of index
  $-\theta$, and has cumulant given by
  \[
    \kappa(q)
    =
    -2^{\theta}
    \frac{\Gamma(\frac{1+2\theta-q}{2})\Gamma(\frac{2+2\theta-q}{2})}
    {\Gamma(\frac{1+b+\theta - q}{2})\Gamma(\frac{3-b+\theta-q}{2})}
    \frac{\Gamma(\frac{-\theta+q}{2})\Gamma(\frac{1-\theta+q}{2})}
    {\Gamma(\frac{1-b-\theta+q}{2})\Gamma(\frac{b-1-\theta+q}{2})},
    \quad q \in (\theta,1+2\theta),
  \]
  where $b \in (0,1/2]$ is chosen so that
  $
  r \sin (\pi(\theta-1/2))
  =
  \cos(\pi b)
  $.
  Define
  \begin{align*}
    \omega_- &= \theta+1-b, \\
    \omega_0 &= \theta+1, \\
    \omega_+ &= \theta+1+b.
  \end{align*}
  Then, $\omega_-$ and $\omega_+$ are the unique zeroes of $\kappa$,
  and $\kappa(q) < 0$ for $q\in (\omega_-,\omega_+)$.

  The spine $Y_{\omega_0}$ corresponding to $\omega_0$
  is the ricocheted stable process with
  parameters $(\theta,0,b)$;
  that is, $\theta(1-\rho) = 1/2$ and $\pf = r\sin(\pi(\theta-1/2))$.
  The spine $Y_{\omega_-}$ is the same process conditioned to
  reach zero continuously in the sense of \cite{CR-cond-zero},
  and the spine $Y_{\omega_+}$ is
  the process conditioned to avoid zero in the sense of \cite{KRW-cond}.
\end{theorem}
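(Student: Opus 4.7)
The plan is to bootstrap from the explicit cumulant $\kappa\crc$ already known for $\Zb\crc$. Since $\Zb$ and $\Zb\crc$ share the same cell process (hence the same Lévy measure $\nu$ and Laplace exponent $\psi\crc$) and differ only through the selection probability $c(x) = \Indic{x<1} + r\Indic{x>1}$ in place of $\Indic{x<1}$, the formula $\kappa(q) = \psi(q) + \int c(x)\lvert1-x\rvert^q\,\nu(\dd x)$ gives immediately
\[
  \kappa(q) = \kappa\crc(q) + r\int_{(1,\infty)} (x-1)^q\,\nu(\dd x).
\]
Everything else reduces to evaluating this integral and recognising the resulting expression.

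Using the explicit density of $\nu$ on $(1,\infty)$ and the substitution $x = 1/t$, the integral $\int_1^\infty (x-1)^q (x(x-1))^{-(\theta+1)}\,\dd x$ becomes a beta integral equal to $\Gamma(1+2\theta-q)\Gamma(q-\theta)/\Gamma(\theta+1)$, finite precisely on $(\theta,1+2\theta)$. After substituting $\cos(\pi b) = r\sin(\pi(\theta-1/2))$ and adding the known $\kappa\crc$, the cosine difference identity gives
\[
  \kappa(q) = -\frac{2\Gamma(1+2\theta-q)\Gamma(q-\theta)}{\pi}
  \sin\tfrac{\pi(\theta+1+b-q)}{2}\,\sin\tfrac{\pi(q+b-\theta-1)}{2}.
\]
I would then apply the Legendre duplication formula to each $\Gamma$ factor and the reflection identity $\Gamma(z)\Gamma(1-z)=\pi/\sin(\pi z)$ to each sine to obtain the eight-gamma product stated in the theorem; tracking the powers of $2$ and $\pi$ gives the prefactor $-2^\theta$. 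The zeros and sign of $\kappa$ on $(\theta,1+2\theta)$ can then be read off directly from the sine-product form: the only vanishing points are $\omega_\pm = \theta+1\pm b$, and $\kappa<0$ strictly between them.

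For the spines, I would substitute $\omega_0 = \theta+1$ into the product form of $\kappa$ and verify by inspection that $q\mapsto \kappa(\omega_0+q)$ matches the Laplace exponent of the Lamperti transform of the ricocheted stable process with $(\theta,\sigma,b)$ and $\sigma=0$ displayed in \autoref{s:ricochet}. The relation $\sigma = 1/2-\theta(1-\rho) = 0$ fixes $\theta(1-\rho) = 1/2$, and $b = \frac{1}{\pi}\arccos(\pf\cos(\pi\sigma))$ with $\sigma=0$ gives the stated identification $\pf = r\sin(\pi(\theta-1/2))$. For the other two spines, $\kappa(\omega_\pm+q) = \kappa(\omega_0+q\pm b)$ is the Esscher shift by $\pm b$ of the spine-at-$\omega_0$ Laplace exponent, which in the pssMp picture corresponds to the Doob $h$-transform by $h(y) = y^{\pm b}$ of the ricocheted stable process; matching this $h$-transform with the definitions of \cite{CR-cond-zero} and \cite{KRW-cond} (whose harmonic functions are precisely powers with exponents given by the roots of the Lamperti Laplace exponent) identifies $Y_{\omega_-}$ with the process conditioned to reach zero continuously and $Y_{\omega_+}$ with the process conditioned to avoid zero.

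Finally, local finiteness follows once the zeros $\omega_\pm$ are in hand: the non-negative martingale $\Mm_{\omega_+}$ controls the number of cells above any threshold $\epsilon>0$ at each generation via the bound $\#\{u : \Xx_u(0) \ge \epsilon\} \le \epsilon^{-\omega_+}\Mm_{\omega_+}(n)$, and self-similarity extends this uniformly in time, as in the corresponding argument for $\Zb\crc$ in \cite{BBCK-maps,BeMGF}. I expect the main obstacle to be Step 5 — the identification of the two conditioned spines — since it requires care in reconciling the different conventions used in \cite{CR-cond-zero} and \cite{KRW-cond} with the Esscher-shift description arising naturally from the cumulant. The gamma-function manipulation in Step 2, while technically the longest calculation, is essentially mechanical.
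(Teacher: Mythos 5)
Your treatment of the cumulant, its zeroes, and the spines follows essentially the same route as the paper: the decomposition $\kappa = \kappa\crc + r\int_1^\infty(x-1)^q\,\nu(\dd x)$, the beta-integral evaluation, the cosine-difference/product-to-sum manipulation followed by reflection and Legendre duplication, reading off $\omega_\pm$ and the sign of $\kappa$ from the sine-product form, and matching $q\mapsto\kappa(q+\omega_0)$ with the ricocheted Laplace exponent at $\sigma=0$ (the paper identifies the conditioned spines by citing the summary in \cite[\S 13.4.4]{Kyp2} rather than spelling out the Esscher/Doob $h$-transform picture, but the content is the same). That part is fine.

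The genuine gap is the local finiteness step. Your bound $\#\{u:\lvert u\rvert=n+1,\ \Xx_u(0)\ge\epsilon\}\le\epsilon^{-\omega_+}\Mm_{\omega_+}(n)$ controls birth masses generation by generation, but local finiteness is a statement about a fixed-time slice: at time $t$ cells from \emph{all} generations coexist, a cell born with mass below $\epsilon$ can later exceed $\epsilon$ (the cell process has upward jumps), and nothing in "self-similarity extends this uniformly in time" converts per-generation control at birth into finiteness of the number of cells with size in a compact set at time $t$. The mechanism actually needed — and the one the paper uses — is Bertoin's excessive-function criterion: since $\kappa(q)\le 0$ for some $q$, \cite[Lemma 2]{BeMGF} gives $\EE_x\bigl[X_t^q+\sum_{0<s\le t}\lvert\Delta X_s\rvert^q\bigr]\le x^q$, and \cite[Theorem 1]{BeMGF} then yields local finiteness by propagating this bound along the genealogy. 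Crucially, those results are proved for cell processes in which only \emph{downward} jumps create daughters; here daughters are also created at upward jumps, where the daughter's mass $u(x-1)$ can exceed the mother's, so one must check (as the paper does, after reducing to $r=1$ by thinning) that the statements remain valid with the all-jumps definition of excessiveness. Your sketch neither supplies an argument bridging generations to fixed times nor acknowledges that the cited results require this extension, and that is precisely where the work lies.
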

\begin{proof}
  The process $\Zb$, if it is indeed locally finite, has cumulant
  \[
    \kappa(q)
    =
    \kappa\crc(q) + r\int_{1}^\infty (x-1)^q\, \nu(\dd x),
  \]
  where $\nu$
  is the dislocation measure of $\Zb\crc$.

  Using a beta integral
  we find that
  \[
    \int_1^\infty (x-1)^{q-(\theta+1)} x^{-(\theta+1)}  \, \dd x
    = \frac{\Gamma(q-\theta)\Gamma(1-q+2\theta)}{\Gamma(1+\theta)},
    \quad q\in (\theta,1+2\theta),
  \]
  whence
  \begin{align}
    \kappa(q)
    &=
    \Gamma(1+2\theta-q)\Gamma(q-\theta)
    \frac{\cos(\pi b) - \cos(\pi(\theta+1-q))}{\pi} \nonumber \\
    &=
    -\frac{2}{\pi}
    \Gamma(1+2\theta-q)\Gamma(q-\theta)
    \sin\left(\pi\left(\frac{1+b+\theta-q}{2}\right)\right)
    \sin\left(\pi\left(\frac{b-1-\theta+q}{2}\right)\right) \label{e:kappa-alt} \\
    &=
    -2\pi
    \frac{\Gamma(1+2\theta-q)\Gamma(q-\theta)}
    {\Gamma(\frac{1+b+\theta-q}{2})\Gamma(\frac{3-b+\theta-q}{2})
    \Gamma(\frac{1-b-\theta+q}{2})\Gamma(\frac{b-1-\theta+q}{2})}, \nonumber
  \end{align}
  using product-sum identities and the reflection formula.
  The expression for $\kappa$ given in the statement then follows
  by applying the Legendre duplication formula to the
  gamma functions in the numerator.

  Treated as a meromorphic function, $\kappa$ has poles at
  \begin{align*}
    \rho_k              & = 2\theta + 1 + k,     & k & = 0,1,2, \dotsc, \\
    \hat{\rho}_k        & = \theta - k,          & k & = 0,1,2, \dotsc,
  \end{align*}
  and zeroes at
  \begin{align*}
    \zeta^{(1)}_k       & = b+\theta+2k+1,       & k & = 0,1,2,\dotsc,  \\
    \zeta^{(2)}_k       & = \theta-b+2k+3,       & k & = 0,1,2,\dotsc,  \\
    \hat{\zeta}^{(1)}_k & = b + \theta - (2k+1), & k & = 0,1,2,\dotsc,  \\
    \hat{\zeta}^{(2)}_k & = \theta - b - (2k-1), & k & = 0,1,2,\dotsc.
  \end{align*}
  Comparing these, we see that $\omega_- = \hat{\zeta}^{(2)}_0$
  and $\omega_+ = \zeta^{(1)}_0$ are the only
  zeroes of $\kappa$ within its domain $(\theta,1+2\theta)$.

  We address the local finiteness of $\Zb$. For this, it is enough
  to assume $r=1$, since other values of $r$ can be obtained by
  randomly removing cells from this one.
  Since there are values of $q$ such that $\kappa(q)\le0$,
  \cite[Lemma 2]{BeMGF} implies that $x \mapsto x^q$
  is excessive for the cell process $X$ of $\Zb$, in the sense that
  \[ \EE_x \left[ X_t^q + \sum_{0<s\le t} \lvert \Delta X_s\rvert^q \right] \le x^q. \]
  In turn, \cite[Theorem 1]{BeMGF} implies that $\Zb$ is locally finite.
  (These results are proven under the assumption that the cell process
  only jumps downward, but their statements remain valid in our more
  general setting, if we use the above definition of excessiveness.)

  The claim about the spine processes follows by comparing
  $q \mapsto \kappa(q+\omega_0)$ with the description
  of the ricocheted stable process in section~\ref{s:ricochet}.
  As shown in \cite{CR-cond-zero,KRW-cond}
  and summarised in \cite[\S 13.4.4]{Kyp2},
  the conditionings of the ricocheted
  stable process to reach zero continuously and avoid zero
  have Lamperti transformation with Laplace exponent
  $q\mapsto \kappa(q+\omega_-)$ and $q\mapsto\kappa(q+\omega_+)$,
  respectively.
\end{proof}

The relation $\pf = r\sin\pi(\theta-1/2)$ reflects the differing rates 
of ricochets, which occur as a result of upward jumps in the 
growth-fragmentation and downward jumps in the ricocheted stable process.

The expression of $\kappa$ as a ratio of eight gamma functions,
instead of as formula \eqref{e:kappa-alt},
may
seem complex,
but it makes much clearer the Wiener-Hopf
factorisations of the spine processes.
For instance, $Y_{\omega_0}$
is a pssMp whose Lamperti transformation
has Laplace exponent
\[
  \psi(q)
  =
  -2^{\theta}
  \frac{\Gamma(\frac{\theta-q}{2})\Gamma(\frac{1+\theta-q}{2})}
  {\Gamma(\frac{2-b-q}{2})\Gamma(\frac{b-q}{2})}
  \times
  \frac{\Gamma(\frac{1+q}{2})\Gamma(\frac{2+q}{2})}
  {\Gamma(\frac{2-b+q}{2})\Gamma(\frac{b+q}{2})},
\]
and the two functions on either side of the $\times$ represent its
Wiener-Hopf factors \cite{KPV-dblhg}.

When $\theta \to 1/2$ or $\theta = 3/2$, $b$ takes the value $1/2$ and
$\kappa$ approaches the cumulant of the Brownian fragmentation
\cite[p.~338]{Ber-ssf} or the exploration of a Boltzmann triangulation
\cite{BCK-maps}, respectively.

A particularly relevant situation appears in the following result,
part of which was observed, using properties of CLE rather than
the explicit description
of $\Zb$, in \cite[p.~33]{MSW-scle}.
\begin{corollary}
  When $r=1$, $b = \lvert \theta-1\rvert$.
  In the case $\theta > 1$, we have $\omega_- = 2$ and $\omega_+ = 2\theta$;
  in the case $\theta \le 1$, we have $\omega_- = 2\theta$ and $\omega_+ = 2$.
\end{corollary}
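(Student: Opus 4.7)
The proof is essentially a trigonometric identity followed by a substitution. Here is the plan.

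First I would unpack the defining relation of $b$ from \autoref{t:gf-rico}, which at $r=1$ reads $\sin(\pi(\theta-1/2)) = \cos(\pi b)$. Using $\sin(x - \pi/2) = -\cos(x)$, the left-hand side equals $-\cos(\pi\theta) = \cos(\pi(1-\theta)) = \cos(\pi(\theta-1))$, where the last equality uses that cosine is even. So the equation becomes $\cos(\pi b) = \cos(\pi|\theta-1|)$.

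Next I would use the parameter restriction $\theta \in (1/2, 3/2]$ (the range in which $\Zb\crc$ is defined) to note that $|\theta-1| \in [0, 1/2]$, so $|\theta-1|$ lies in the range in which $b$ is uniquely determined by its cosine. Since by assumption $b \in (0,1/2]$, this gives $b = |\theta - 1|$. (At $\theta = 1$ we get $b = 0$, strictly outside the stated range, but this is a removable limit: both $\omega_\pm$ collapse to $\theta + 1 = 2 = 2\theta$, consistent with the two cases of the corollary.)

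Finally, I would substitute into the formulas $\omega_\pm = \theta + 1 \pm b$ from \autoref{t:gf-rico}. If $\theta > 1$, then $b = \theta - 1$, so $\omega_- = \theta + 1 - (\theta - 1) = 2$ and $\omega_+ = \theta + 1 + (\theta - 1) = 2\theta$. If $\theta \le 1$, then $b = 1 - \theta$, so $\omega_- = \theta + 1 - (1 - \theta) = 2\theta$ and $\omega_+ = \theta + 1 + (1 - \theta) = 2$. There is no real obstacle here; the only point requiring care is verifying that $|\theta - 1|$ genuinely lies in the admissible range for $b$, which is immediate from the parameter restriction on $\theta$.
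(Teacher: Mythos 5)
Your argument is correct and is exactly the intended verification: the paper states this corollary without proof, as an immediate consequence of the relation $r\sin(\pi(\theta-1/2))=\cos(\pi b)$ and $\omega_\pm=\theta+1\pm b$ from \autoref{t:gf-rico}. Your trigonometric reduction, the injectivity of cosine on the relevant range given $\theta\in(1/2,3/2]$, and the remark about the degenerate case $\theta=1$ (where $b=0$ and $\omega_-=\omega_+=2$, consistent with the following corollary) are all in order.
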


Specialising still further, we have a relation with planar
Brownian motion.

\begin{corollary}
  When $\theta = 1$ and $r=1$, the cell process of $\Zb$ is the same
  as that found by \citet{AdS-exc}, where it describes the size
  of excursions in a planar Brownian motion.
  The process $\Zb$ itself can be obtained from the process
  $\overline{\mathbf{X}}$ in that work by taking absolute values
  of the cell `masses'.
  Its cumulant is given by
  \[
    \kappa(q)
    =
    -2
    \frac{\Gamma(\frac{3-q}{2})\Gamma(\frac{q-1}{2})}
    {\Gamma(\frac{2-q}{2})\Gamma(\frac{q-2}{2})},
    \quad q \in (1,3).
  \]

  Denote by $\mathcal{Z}$ the generational growth-fragmentation
  associated with $\Zb$.
  In this case, $\omega_- = \omega_0 = \omega_+ = 2$, the process
  \[
    \Mm_2(n) = \sum_{u\in \tree, \lvert u\rvert = n+1} 
    \mathcal{Z}_u(0)^2, 
    \quad n\ge 0,
  \]
  is a martingale, and the corresponding spine is the absolute value
  of a symmetric Cauchy process.
\end{corollary}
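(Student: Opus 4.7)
The plan is to specialise \autoref{t:gf-rico} to $\theta = 1$ and $r = 1$, for which the preceding corollary gives $b = \lvert \theta - 1\rvert = 0$. First I would verify the displayed cumulant formula by direct simplification: substituting $\theta = 1$ and $b = 0$ into the eight-gamma expression, the factor $\Gamma(\frac{2+2\theta-q}{2})$ in the numerator coincides with $\Gamma(\frac{3-b+\theta-q}{2})$ in the denominator (both equal $\Gamma(\frac{4-q}{2})$), and likewise $\Gamma(\frac{1-\theta+q}{2})$ in the numerator coincides with $\Gamma(\frac{1-b-\theta+q}{2})$ in the denominator (both equal $\Gamma(\frac{q}{2})$). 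Cancelling these two pairs leaves exactly the four-gamma expression stated in the corollary. The three exponents $\omega_\pm$ and $\omega_0$ collapse to $\theta + 1 = 2$, so $\kappa$ has a (double) zero at $q = 2$; since $2 \in (\dom \kappa)^\circ = (1,3)$ and $\kappa(2) = 0$, the criterion recalled just before \autoref{t:gf-rico} immediately identifies $\Mm_2$ as a martingale.

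Identifying the spine $Y_{\omega_0}$ is then a matter of substitution. The ricocheted-stable parameters $(\theta, \sigma, b) = (1, 0, 0)$ correspond to positivity parameter $\rho = 1/2$, so the underlying Lévy process $X$ is a symmetric $1$-stable, i.e., a symmetric Cauchy process; and the ricochet probability is $\pf = r\sin(\pi(\theta-1/2)) = 1$. By the observation at the close of \autoref{s:ricochet}, in the symmetric case the ricocheted stable process coincides with $\lvert X\rvert$ up to additional killing at sign changes with probability $1 - \pf$; at $\pf = 1$ this killing disappears, and the spine is the absolute value of a symmetric Cauchy process, as asserted.

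Finally, the cell process of $\Zb$ is by construction that of $\Zb\crc$, and so its Lamperti exponent is $\psi\crc$ specialised to $\theta = 1$. To complete the comparison with \cite{AdS-exc}, one has to check that this Laplace exponent matches the exponent of the cell process appearing in the excursion decomposition of planar Brownian motion derived there, and that passing from the signed growth-fragmentation $\overline{\mathbf{X}}$ of that work to absolute values of cell masses reproduces our selection probability $c(x) = \Ind_{x<1} + \Ind_{x>1}$ at $r = 1$. I expect this step to be the main obstacle; it is essentially notational, involving alignment of the sign convention for daughter cells in \cite{AdS-exc} with our unsigned setting. Once carried out, the equality of Laplace exponents together with the characterisation of a self-similar growth-fragmentation by its index and cumulant (recalled before \autoref{t:gf-rico}) delivers the identification in law.
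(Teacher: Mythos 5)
Your specialisation of \autoref{t:gf-rico} to $(\theta,b)=(1,0)$ — the two gamma cancellations giving the four-gamma cumulant, the collapse $\omega_-=\omega_0=\omega_+=2$, the martingale property of $\Mm_2$ from $\kappa(2)=0$ with $2\in(1,3)$, and the identification of the spine as $\lvert X\rvert$ for a symmetric Cauchy process via the parameters $(\theta,\sigma,b)=(1,0,0)$, $\pf=r\sin(\pi(\theta-1/2))=1$ and the symmetric-case remark at the end of \autoref{s:ricochet} — is correct and is exactly the route the paper intends, which states this corollary without a separate proof. The remaining identification with the process $\overline{\mathbf{X}}$ of \cite{AdS-exc}, which you leave as a comparison of the cell process's Laplace exponent and of the sign convention for daughter cells (so that taking absolute values yields the selection probability $c(x)=\Indic{x<1}+\Indic{x>1}$), is likewise only asserted rather than proved in the paper, so your treatment matches its level of detail.
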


The second part of this corollary provides a proof of
\cite[Proposition 3.7]{AdS-exc}, once the identification of
the growth-fragmentations is known.
However, the process $\overline{\mathbf{X}}$
constructed in that work contains more information than the
approach using $\Zb$; the ricochet step loses some information
about the `sign' of excursions. This information
is preserved (and the setting extended to cover
excursions involving stable processes) by Da Silva's
framework
of signed growth-fragmentations
\cite{DS-signed}.
It must also be said that 
ricocheted processes are not strictly required for this
application: when $\theta=1$ we are in a symmetric setting,
so $\Zb$ can be described in terms of the 
absolute value of a Cauchy process
\cite{CPP-radial,KKPW-T0}.

Using this description, it is possible to learn something
about the appearance of small excursions. In the generational
growth-fragmentation $\mathcal{Z}$, the cells $u$ describe
excursions in the upper half-plane above some
barrier, represented by time.
For each $u\in\tree$ and $t\ge 0$, let $u(t)$
be the most recent ancestor of $u$
which was alive at time $t$ (which may be $u$ itself.)
If we define
\[
  T^-_u(\epsilon) 
  = \inf\{ t\ge 0: \mathcal{Z}_{u(t)}(t-b_{u(t)}) < \epsilon\},
  \quad u \in \tree,
\]
then 
$\mathcal{Z}(T^-(\epsilon)) \coloneqq (\mathcal{Z}_{u(T^-_u(\epsilon))}(T^-_u(\epsilon)-b_{u(T^-_u(\epsilon))}) : u \in \tree, u(T^-_u(\epsilon))=u)$
represents the collection of excursions frozen at the first time that their
size drops below $\epsilon$. 
(The condition `$u(T^-_u(\epsilon)) = u$' avoids multiple counting.)
Additive functionals of the
set can be described using the growth-fragmentation:
\begin{corollary}
  \label{c:cauchy}
  Let $\theta=1$ and $r=1$.
  For an integrable function $f$ and $\epsilon>0$,
  \[
    \EE_1\! \left[ \sum_{z \in \mathcal{Z}(T^-(\epsilon))} f(z)z^2 \right]
    =
    \int_{0}^\epsilon \frac{2}{\pi}
    \frac{1}{1-y^2} \sqrt{\frac{1-\epsilon^2}{\epsilon^2-y^2}}
    f(y) \, \dd y.
  \]
\end{corollary}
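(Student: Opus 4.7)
The plan is to apply the spinal many-to-one formula for the martingale $\Mm_2$ at the stopping line $L(\epsilon) \coloneqq \{u \in \tree : u(T^-_u(\epsilon)) = u\}$, identify the spine $Y_2$ with the absolute value of a Cauchy process using the preceding corollary, and compute the spine's first-passage distribution below $\epsilon$ from the classical Poisson-kernel formula for Cauchy entering an interval.

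Since $\kappa(2)=0$ in this case, $(\Mm_2(n))_{n\ge 0}$ is a true nonnegative $(\GG_n)$-martingale under $\PP_1$ with mean $1$. The stopping-line version of the many-to-one formula (standard in the growth-fragmentation literature; compare \cite[\S 2.3]{BBCK-maps} and \cite{Shi-gf}) then gives, for every nonnegative measurable $f$,
\[
  \EE_1\biggl[\sum_{u \in L(\epsilon)} \Xx_u(\tau_u)^2 f\bigl(\Xx_u(\tau_u)\bigr)\biggr]
  = \EE^2_1\bigl[f\bigl(Y_2(\sigma_\epsilon)\bigr)\bigr],
\]
where $\tau_u \coloneqq T^-_u(\epsilon)-b_u$ and $\sigma_\epsilon \coloneqq \inf\{t\ge 0 : Y_2(t) < \epsilon\}$. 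The bijection $u \mapsto \Xx_u(\tau_u)$ between $L(\epsilon)$ and $\mathcal{Z}(T^-(\epsilon))$ identifies the left-hand side with the expectation in the statement of the corollary.

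By the preceding corollary, $Y_2$ under $\PP^2_1$ is distributed as $|C|$ for a symmetric Cauchy process $C$ starting from $1$, so $\sigma_\epsilon$ is the first entry time of $C$ into $(-\epsilon,\epsilon)$; this is a.s.\ finite by the topological recurrence of the one-dimensional Cauchy process. The Blumenthal--Getoor--Ray Poisson-kernel formula for the entrance of symmetric Cauchy into $(-1,1)$ reads
\[
  \PP_x\bigl(C_{\sigma_1} \in \dd y\bigr)
  = \frac{1}{\pi}\sqrt{\frac{x^2-1}{1-y^2}}\,\frac{\dd y}{x-y},
  \qquad x>1,\ y\in(-1,1).
\]
Rescaling by the $1$-self-similarity of $C$ with $x=1/\epsilon$ yields the density of $C_{\sigma_\epsilon}$ starting from $1$, equal to
\[
  \frac{1}{\pi(1-y)}\sqrt{\frac{1-\epsilon^2}{\epsilon^2-y^2}},\qquad y \in (-\epsilon,\epsilon),
\]
and folding over $y\mapsto -y$ delivers the law of $|C_{\sigma_\epsilon}|$ on $(0,\epsilon)$,
\[
  \frac{2}{\pi(1-y^2)}\sqrt{\frac{1-\epsilon^2}{\epsilon^2-y^2}}.
\]
Substituting this into the many-to-one identity completes the proof.

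The main obstacle is justifying the stopping-line extension of the many-to-one formula: one must check that $\Mm_2$ stopped on $L(\epsilon)$ still has expectation $1$. This follows by combining the local finiteness of $\Zb$ from \autoref{t:gf-rico} with the a.s.\ finiteness of $\sigma_\epsilon$ under $\PP^2_1$ (immediate from the recurrence of Cauchy), via the standard spinal-decomposition machinery. Once this step is in place, the remaining Cauchy computation is entirely routine.
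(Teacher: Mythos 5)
Your proposal is correct and follows essentially the same route as the paper: a many-to-one/spinal identity at the stopping line where cells first drop below $\epsilon$ (the paper invokes a version of \cite[Proposition 4.1]{BBCK-maps}), identification of the spine for $\omega_0=2$ with the absolute value of a symmetric Cauchy process, and then the Blumenthal--Getoor--Ray interval-hitting distribution with scaling and folding, which yields exactly the stated density.
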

\begin{proof}
  Using a version of \cite[Proposition 4.1]{BBCK-maps},
  and writing $g(z) = f(z)z^{2}$,
  we obtain
  \begin{align*}
    \EE_1\!\! \left[ \sum_{u\in\tree, u(T^-_u(\epsilon)) = u} g(\mathcal{Z}_u(T^-_u(\epsilon)-b_{u(T^-_u(\epsilon))}) \right]
    &=
    \EE_1\!\! \left[
      \sum_{u \in \tree}
      g(\mathcal{Z}_u(t)) 
      \Indic{\forall s \le t: \mathcal{Z}_{u(s)}(s) \ge \epsilon}
      \Indic{\mathcal{Z}_{u}(t)<\epsilon}
    \right] \\
    &=
    \EE^{\omega_0}_1\bigl[ g(Y_{T^-(\epsilon)}) Y_{T^-(\epsilon)}^{-2} \bigr]
    =
    \EE^{\omega_0}_1\bigl[ f(Y_{T^-(\epsilon)}) \bigr],
  \end{align*}
  where $Y$ is the spine process corresponding to exponent $\omega_0 = 2$,
  and $T^-(\epsilon)$ is its first passage time below $\epsilon$.
  Since $Y$ is the absolute value of a symmetric Cauchy process,
  the result follows using the hitting distribution in \cite{BGR-st-hit}.
\end{proof}

We can also observe a variant of \cite[Proposition~1.12]{Ber-fc}:
\[
  \lim_{\epsilon \downarrow 0}
  \EE_1\!
  \left[
    \sum_{z \in \mathcal{Z}(T^-(\epsilon))} f(z/\epsilon) z^2 
  \right]
  =
  \int_0^1 \frac{2f(y)\,\dd y}{\pi\sqrt{1-y^2}}.
\]

In principle, it is possible to prove similar results for the process
$\Zb\crc$, using \cite{Rog-hit}, or $\Zb$ with $\theta\ne 1$, using
\cite[Theorem 2.1]{KPV-dblhg}. The latter is probably the
most interesting for this note, but the
analogue of the right-hand side in \autoref{c:cauchy} involves
an integral over a product of sums of hypergeometric functions,
and no simplification appears to be possible.
However, by a remarkable coincidence of distributions,
it is possible to obtain the first passage upward.

Define the first passage time
\[
  T^+_u(x) = \inf\{ t\ge 0 : \mathcal{Z}_{u(t)}(t-b_{u(t)}) > x \},
  \quad u \in \tree,
\]
of the size of cell $u$ above level $x$, and $\mathcal{Z}(T^+(x))$ analogously
with the downward passage case.
We allow that if the size of a cell never exceeds level $x$ before its death,
it is omitted from $\mathcal{Z}(T^+(x))$.
Additive functionals of the growth-fragmentation frozen as the cells
exceed size $x$ are given as follows.

\begin{corollary}
  Let $r = 1$, $x>1$ and let $f$ be an integrable function. Then
  \[
    \EE_1\!\left[ \sum_{z \in \mathcal{Z}(T^+(x))} f(z) z^2 \right]
    = \int_x^\infty f(y) h_\theta(x,y) \, \dd y
  \]
  where, if $\theta\le 1$,
  \[
    h_\theta(x,y) = 2\frac{\sin \pi(\theta-1/2)}{\pi}
    (x^2-1)^{\theta-1/2} (y^2-x^2)^{1/2-\theta} \frac{y}{y^2-1}
    , \quad y > x,
  \]
  and if $\theta>1$,
  \begin{align*}
    h_\theta(x,y)
    &=
    2\frac{\sin \pi(\theta-1/2)}{\pi}
    (x^2-1)^{\theta-1/2} (y^2-x^2)^{1/2-\theta} \frac{y}{y^2-1} 
    \\
    & \quad\  {} - 2(\theta-1)\frac{\sin \pi(\theta-1/2)}{\pi}
    \frac{x}{y} (y^2-x^2)^{1/2-\theta} \int_1^{x^2} (t-1)^{\theta-3/2} t^{-1/2}\, \dd t,
    \quad y>x.
  \end{align*}
\end{corollary}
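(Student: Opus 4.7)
The plan is to reduce this to a first-passage computation for a single Markov process, by the stopping-line spine (many-to-one) identity used in the proof of \autoref{c:cauchy}. When $r=1$, the relation $r\sin\pi(\theta-1/2)=\cos(\pi b)$ forces $b=\lvert\theta-1\rvert$, so \autoref{t:gf-rico} gives $\omega_+=2$ when $\theta\le 1$ and $\omega_-=2$ when $\theta>1$; in both cases $\kappa(2)=0$, so $\Mm_2$ is a genuine martingale and extends along the stopping line $\{u \in\tree : u(T^+_u(x))=u\}$. Applying this with $g(z)=f(z)z^2$ yields
\[
  \EE_1\!\left[ \sum_{z \in \mathcal{Z}(T^+(x))} f(z)z^2 \right]
  = \EE^{\omega_*}_1\bigl[ f(Y_{T^+(x)}) \bigr],
\]
where the spine $Y$ under $\PP^{\omega_*}_1$ is, by the final part of \autoref{t:gf-rico}, the ricocheted stable process with $\theta(1-\rho)=1/2$ and $\pf=\sin\pi(\theta-1/2)$, conditioned to avoid zero when $\theta\le 1$ and conditioned to hit zero continuously when $\theta>1$ (in the latter case, only realisations with $T^+(x)<\zeta$ contribute to the right-hand side).

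The remaining task is then to obtain the law of $Y_{T^+(x)}$ starting from $1$. Since the Lamperti transform of the ricocheted stable process and of each of its two standard conditionings is a double-hypergeometric Lévy process with explicit Wiener--Hopf factorisation, this is in principle a direct application of \cite[Theorem~2.1]{KPV-dblhg}, followed by substitution of the parameters corresponding to $r=1$.

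The main obstacle is the explicit simplification. The KPV first-passage density is generically a sum of Gauss hypergeometric functions that does not collapse to elementary form. However, the value $b=\lvert\theta-1\rvert$ forced by $r=1$ is degenerate: several gamma-function arguments in the Wiener--Hopf factors coincide, and the associated hypergeometric pieces reduce to elementary ones. When $\theta\le 1$ the conditioned process almost surely exceeds $x$, and a single clean term survives, giving the first line of $h_\theta(x,y)$. When $\theta>1$ the continuous-entry conditioning introduces positive probability of absorption at $0$ before reaching $x$; I expect this defective mass to generate the additional integral term in $h_\theta(x,y)$, encoding the compensator of the Doob $h$-transform underlying the conditioning. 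The bulk of the argument will then be careful but routine gamma- and trigonometric-function manipulation, the delicate point being correct bookkeeping of this absorption defect in the case $\theta>1$.
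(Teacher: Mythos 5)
Your spine reduction is exactly the paper's first step: at $r=1$ one has $b=\lvert\theta-1\rvert$, the relevant exponent is $2$ (as $\omega_+$ when $\theta\le 1$, as $\omega_-$ when $\theta>1$), and the stopping-line many-to-one identity gives $\EE_1\bigl[\sum_{z\in\mathcal{Z}(T^+(x))} f(z)z^2\bigr]=\EE^{\omega_*}_1[f(Y_{T^+(x)})]$, with the indicator of $\{T^+(x)<T^0\}$ in the case $\theta>1$. The gap is in what remains. You propose to read off the law of $Y_{T^+(x)}$ from \cite[Theorem~2.1]{KPV-dblhg} and then \emph{expect} the hypergeometric expressions to collapse at the degenerate value $b=\lvert\theta-1\rvert$. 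That is precisely the step the paper says it could not carry out: it remarks, in the discussion after \autoref{c:cauchy}, that the KPV route produces an integral over a product of sums of hypergeometric functions for which ``no simplification appears to be possible''; moreover it is not established that that theorem, as stated, covers the conditioned ($h$-transformed) spine rather than the ricocheted process itself, nor passage upwards rather than downwards. Asserting that ``a single clean term survives'' without exhibiting the cancellation is an announcement of the answer, not a proof; and for $\theta>1$ your heuristic that the ``absorption defect'' generates the second term is likewise unverified.

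The paper's actual mechanism, which your proposal misses, is the substantive content of the proof: writing $\psi_+(q)=\kappa(q+2)$ for the Laplace exponent of the Lamperti transform of the spine and rescaling via $\xi'_t=2\xi_{2^{-\theta}t}$, one finds that the ascending ladder height of $\xi'$ has Laplace exponent $q\mapsto\Gamma(\theta-1/2+q)/\Gamma(q)$, which coincides with the ascending ladder height of the Lamperti transform of the path-censored stable process \cite[sections~5.4--5.5]{KPW-cens} --- the ``remarkable coincidence'' the paper highlights, for which no pathwise explanation is known. Since the position of a pssMp at first passage upwards depends only on this ascending ladder height, $Y_{T^+(x)}$ is (after the substitution $y\mapsto y^2$) equal in law to the overshoot of the stable process with positivity parameter $1-\frac{1}{2\theta}$ at first passage above a level; this is available in closed form from Rogozin's formula \cite{Rog-hit} (equation (3) of \cite{KPW-cens}) when $\theta\le 1$, and from \cite[Theorem~1.5]{KPW-cens} when $\theta>1$, the latter accounting for the event $\{T^+(x)<T^0\}$ and producing the extra integral term in $h_\theta$. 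To rescue your route you would have to verify in detail that the KPV first-passage expression specialises at $b=\lvert\theta-1\rvert$ to these elementary densities, including the killing bookkeeping for $\theta>1$; nothing in your proposal shows this is feasible, and the paper's evidence suggests it is not routine.
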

\begin{proof}
  We begin with the $\theta \le 1$ case,
  and recall first that when $r=1$, we have $\omega_+ = 2$, and the corresponding
  spine is the ricocheted stable process conditioned to stay positive,
  which we will denote $Y$ under measure $\PP^{\omega_+}_\cdot$.
  Analogously with the proof of the
  preceding corollary, we can express
  \[
    \EE_1\! \left[  \sum_{z \in \mathcal{Z}(T^+(x))} f(z) z^2 \right]
    = \EE_1^{\omega_+}[ f(Y_{T^+(x)})],
  \]
  where $T^+(x)$ is the first passage time of $Y$ above $x$.
  The process $Y$ can be studied via its Lamperti transform, say $\xi$,
  which has Laplace exponent
  \[
    \psi_+(q)
    = \kappa(q+2)
    = -2^\theta
    \frac{\Gamma(\frac{2\theta-1-q}{2})}
    {\Gamma(\frac{-q}{2})}
    \frac{\Gamma(\frac{2-\theta+q}{2})\Gamma(\frac{3-\theta+q}{2})}
    {\Gamma(\frac{2+q}{2})\Gamma(\frac{2-2\theta+q}{2})}.
  \]
  It is convenient to rescale time and space by defining
  $\xi'_t = 2\xi_{2^{-\theta}t}$, which has Laplace exponent
  $\psi'(q) = 2^{-\theta}\psi_+(2q)$.
  We see directly that
  \[
    \EE_1^{\omega_+}[ f(Y_{T^+(x)})]
    = \EE_0[ f(e^{H_{S^+(2\log x)}/2})],
  \]
  where $H$ is the ascending ladder height process of $\xi'$ and
  $S^+(\cdot)$ its first passage time upward.

  Examining $\psi'$, we see that
  the Laplace exponent
  of the ascending ladder height process of $\xi'$ is given by
  \[ 
    q \mapsto \frac{\Gamma(\theta - 1/2 + q)}{\Gamma(q)} , \quad q \ge 0. 
  \]
  This is identical to the ascending ladder height of
  the Lamperti transform of the path-censored
  stable process \cite[section 5.4]{KPW-cens}, though we stress that the descending
  ladder height processes differ.
  This allows us to express
  \[
    \EE_0[ f(e^{H_{S^+(2\log x)}/2})]
    = \EE_1\bigl[ f\bigl(X_{T^+(x^2)}^{1/2}\bigr)\bigr]
    = \EE_{x^{-2}} \bigl[ f\bigl(x X_{T^+(1)}^{1/2}\bigr)\bigr],
  \]
  where
  $X$ is the stable process with index
  $\theta$ and positivity parameter $1-\frac{1}{2\theta}$,
  and $T^+(\cdot)$ is its first passage time upward.
  Putting together the pieces, we have that
  \[
    \EE_1\! \left[  \sum_{z \in \mathcal{Z}(T^+(x))} f(z) z^2 \right]
    = \int_1^\infty f(xz^{1/2}) g(z) \, \dd z
    = \int_1^\infty f(y) 2y x^{-2} g(y^2x^{-2})\, \dd y,
  \]
  where $g(y) = \PP_{x^{-2}}(X_{T^+(1)} \in \dd y) / \dd y$.
  This quantity was calculated by \citet{Rog-hit} and appears as
  equation (3) in \cite{KPW-cens}. Substituting $g$ completes the proof.

  The proof for the case $\theta>1$ is similar, with the key difference
  being that now $\omega_- = 2$, so we make use of the spine given by
  the ricocheted stable process conditioned to hit zero,
  with measures $\PP^{\omega_-}_\cdot$, and obtain
  \begin{equation}\label{e:gt1}
    \EE_1\! \left[  \sum_{z \in \mathcal{Z}(T^+(x))} f(z) z^2 \right]
    = \EE_1^{\omega_-}[ f(Y_{T^+(x)}) \Indic{T^+(x) < T^0}]
    = \EE_{x^{-2}}\bigl[ f(xX_{T^+(1)}^{1/2}) \Indic{T^+(1) < T^0}\bigr]
  \end{equation}
  where $T^0$ is the time at which $Y$ (in the middle expression)
  or $X$ (in the latter) hits zero.
  In deriving this equality, we follow the same proof structure, the difference
  being that we use section 5.5 in \cite{KPW-cens}, corresponding to our
  value of $\theta$.
  In order to evaluate the right-hand side of \eqref{e:gt1}, we use
  Theorem 1.5 in \cite{KPW-cens}, and this gives the expression in the statement.
\end{proof}

The preceding proof relies on the curious fact that, up to a space
transformation, the ricocheted stable process conditioned to stay positive
(when $\theta\le 1$) or to hit zero (when $\theta>1$) attains new maxima
in the same way as the stable process (killed upon reaching zero).
We are not aware of any pathwise explanation of this property.

\input{gf-ricochet.bbl_lock}

\end{document}